\newtheorem{theorem}{Theorem}%[section]
\newtheorem{lemma}{Lemma}
\newtheorem{corollary}{Corollary}
\newtheorem{remark}{Remark}
\theoremstyle{remark}
\def\R{\mathbb{R}}
\def\P{\mathbb{P}}
\def\E{\mathbb{E}}
\renewcommand{\phi}{\varphi}
\renewcommand{\epsilon}{\varepsilon}
\newcommand{\cadlag}{{c\`adl\`ag} }
\definecolor{mygray}{gray}{0.9}
\definecolor{careful}{RGB}{250,150,150}
\definecolor{deeppink}{RGB}{255,20,147}
\definecolor{mygreen}{rgb}{0.05, 0.576, 0.03}
\definecolor{myred}{rgb}{0.768, 0.09, 0.09}
\long\def\symbolfootnote[#1]#2{\begingroup
\def\thefootnote{\fnsymbol{footnote}}\footnote[#1]{#2}\endgroup}
\newcommand{\ams}[2]{  \noindent {\footnotesize
             {\small \em AMS {\rm 2000} subject classifications.
           {\rm Primary {\sc #1}; secondary {\sc #2}} } } }
\def\CC{\mathscr{C}}
\def\Beta{\operatorname{B}}
\begin{document}

\title{\bf Does the ratio of Laplace transforms of powers of a function identify
the function?}
\author[1]{Takis Konstantopoulos \thanks{\noindent Takis Konstantopoulos would like to thank ICMS Edinburgh and the University of Edinburgh for their hospitality. }}

\author[1,2]{Linglong Yuan\thanks{\noindent Linglong Yuan acknowledges the support of the National Natural Science Foundation of China (Youth Program, Grant: 11801458), and the XJTLU RDF-17-01-39.}}

\affil[1]{University of Liverpool}
\affil[2]{Xi'an Jiaotong-Liverpool University}

\date{\normalsize 10 September 2020}
\maketitle

%\author{Takis Konstantopoulos\fnref{uol, t}\ead{t}~\footnote{Takis Konstantopoulos would like to thank ICMS Edinburgh.}~~~ Linglong Yuan\footnote{Department of Mathematical Sciences, Xi'an-Jiaotong Liverpool University, yuanlinglongcn@gmail.com}~\footnote{Linglong Yuan acknowledges the support of the National Natural Science Foundation of China (Youth Program, Grant: 11801458), and the XJTLU RDF-17-01-39.}}
%\fnref[uol]{Department of Mathematical Sciences, 
%University of Liverpool}
%\ead[t]{takiskonst@gmail.com}
%\affil[]{Department of Mathematical Sciences\\ Xi'an-Jiaotong Liverpool University}
%\email{yuanlinglongcn@gmail.com}
%\affil[]{Department of Mathematics\\ Uppsala University}
%\date{October 2014}

\abstract{We study the following question: 
if $f$ is a nonzero measurable function on $[0,\infty)$ and $m$ and $n$ distinct
nonnegative integers, does the ratio $\widehat{f^n}/\widehat{f^m}$ of the
Laplace transforms of the powers $f^n$ and $f^m$ of $f$ uniquely determine $f$? The answer is yes if one of $m, n$ is zero, by the inverse Laplace transform. Under some assumptions on the smoothness of $f$ we show that the answer in the general case is also
affirmative. 
The question arose from a problem in economics, specifically in auction theory
where $f$ is the cumulative distribution function of a certain random variable.
This is also discussed in the paper. % if $n-m$ is odd. 
%We explain how this is related to auction theory and pose some open problems.
}
%\symbolfootnote[0]{\keywords{Auctions, Laplace transform}}
\symbolfootnote[0]{\ams{44A10,26Axx}{91B70,91B26}}
%44A10 Laplace transform
% 26Axx 	Functions of one variable
% 62P20   	Statistics: Applications to economics
% 91B70   	Game theory, economics, social and behavioral sciences: Stochastic models
% 91B26   	Game theory, economics, social and behavioral sciences: Market models (auctions, bargaining, bidding, selling, etc.)

\section{Introduction}
Let $f:[0, \infty)\to \R$ be a nonzero measurable function of exponential order, that is,
there are positive numbers $C$ and $c$ such that
$|f(x)| \le C e^{cx}$ for all $x$. Denote by
\[
\widehat f(\lambda) := \int_0^\infty e^{-\lambda x} f(x) dx
\]
the Laplace transform of $f$, 
defined and analytic for all  $\lambda >c$.
If $n$ is a positive integer then the $n$-th power $f^n$ of $f$ is also
of exponential order  and $\widehat{f^n}$ denotes its Laplace transform.
Let $m,n$ be nonnegative integers. 
Define
\[
H_{n,m}(f,\lambda) := \frac{\widehat{f^n}(\lambda)}{\widehat{f^m}(\lambda)}.
\]
The question of interest in this paper is the following:
\begin{quote}
{\bf Uniqueness question:} For given distinct nonnegative integers $n$ and $m$,
does knowledge of the function $H_{n,m}(f,\cdot)$ uniquely specify $f$?
\end{quote}
For $m, n >0$, both $\widehat{f^n}(\lambda)$ and $\widehat{f^m}(\lambda)$ are analytic when $\lambda$ ranges on the complex plane and the real part of $\lambda$ is large enough, see, e.g., \cite[Theorem 6.1]{D12}. Note that we do not consider the question of determining the explicit expression of $f$, which is much harder. Instead, we 
only study the uniqueness of $f$ given the function $H_{n,m}(f, \cdot)$.

Clearly, if $m=0$ (and similarly for $n=0$) then, by the inverse Laplace 
transform \cite{WIDDER}, we know $f^n$ and so we know 
$f$ if $n$ is odd.  
But if $n$ and $m$ are distinct positive integers, the problem seems to be hard.
We aim at giving an answer when we restrict $f$ to a certain class of functions.

It is easy to see that uniqueness, in strict sense, is impossible because
translations do not affect $H_{n,m}(f,\cdot)$. 
Suppose that, for some $a>0$, the function $f$ 
is identically $0$ on an interval $[0,a)$
and let 
\[
\theta_{-a} f(x) := f(x+a).
\]
Then 
\[
\widehat{\theta_{-a} f}(\lambda) = 
e^{\lambda a} \widehat f(\lambda).
\]
Clearly then,
\[
H_{n,m}(f,\cdot) = H_{n,m}(\theta_{-a} f, \cdot).
\]
So $H_{n,m}(f,\cdot)$ specifies $f$ up to a translation.
Hence, to obtain uniqueness, it is necessary to assume 
\begin{equation}
\label{fg0}
\inf\{x:f(x)\neq 0\}=0.
\end{equation}
Even under this condition, we cannot answer the problem in general,
i.e.\ under the sole assumption that the Laplace transform of $f$ exists.

The problem arose as a question in stochastic modeling in economics,
in particular in auction theory \cite{LPV00}. In this case, the function $f$ is the
cumulative distribution function of a certain random variable (see
Section \ref{auctions}) and hence it is a nondecreasing function.
However, the question of uniquely determining $f$ from $H_{n,m}(f,\cdot)$ 
appears to be much more general and hence of interest  independent
of the auction theory model.

Our first result concerns the case where  $f$ is a polynomial.

\begin{theorem}\label{poly}
Let $m,n$ be distinct positive integers and $f,g$ polynomials such that
\[
H_{n,m}(f,\cdot) = H_{n,m}(g,\cdot).
\]
If $n-m$ is odd, then $f$ is identical to $g$. If 
$n-m$ is even, then either $f$ is identical to $g$ or $f$ is identical
to $-g$.
\end{theorem}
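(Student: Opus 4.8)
The plan is to exploit the fact that the Laplace transform of a polynomial is completely explicit: if $p$ has degree $D$ then $\widehat p(\lambda)=\sum_{j=0}^{D}p^{(j)}(0)\,\lambda^{-j-1}$, a polynomial in $1/\lambda$ whose behaviour as $\lambda\to\infty$ is $\widehat p(\lambda)\sim c_p\,\lambda^{-\nu_p-1}$, where $\nu_p$ denotes the order of vanishing of $p$ at the origin and $c_p=\nu_p!\,[x^{\nu_p}]p\neq 0$. Writing the lowest-order terms of $f$ and $g$ as $\alpha x^r$ and $\beta x^s$ (with $\alpha,\beta\neq 0$), the polynomial $f^k$ has lowest-order term $\alpha^k x^{kr}$, so $\widehat{f^k}(\lambda)\sim\alpha^k(kr)!\,\lambda^{-kr-1}$, and likewise for $g$.

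First I would clear denominators: the hypothesis $H_{n,m}(f,\cdot)=H_{n,m}(g,\cdot)$ is equivalent to the identity of rational functions $\widehat{f^n}\,\widehat{g^m}=\widehat{f^m}\,\widehat{g^n}$, which I rewrite in the telescoped form
\[
\bigl(\widehat{f^n}-\widehat{g^n}\bigr)\,\widehat{g^m}=\widehat{g^n}\,\bigl(\widehat{f^m}-\widehat{g^m}\bigr).
\]
Matching the leading behaviour of the original identity as $\lambda\to\infty$, comparison of the powers of $\lambda$ forces $nr+ms=mr+ns$, hence $r=s$, and comparison of the coefficients then gives $(\alpha/\beta)^{n-m}=1$, so $\alpha=\beta$ if $n-m$ is odd and $\alpha=\pm\beta$ if $n-m$ is even. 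Since $\widehat{(-g)^k}=(-1)^k\widehat{g^k}$ yields $H_{n,m}(-g,\cdot)=(-1)^{n-m}H_{n,m}(g,\cdot)$, in the even case I may replace $g$ by $-g$ whenever $\alpha=-\beta$. It therefore suffices to prove the reduced statement: if $f,g$ are nonzero polynomials with the same lowest-order term $\alpha x^r$ and $H_{n,m}(f,\cdot)=H_{n,m}(g,\cdot)$, then $f\equiv g$; undoing the replacement then yields exactly the dichotomy of the theorem, correlated with the parity of $n-m$.

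For the reduced statement I argue by contradiction. Suppose $f\neq g$ and let $p$ be the smallest exponent at which the coefficients of $f$ and $g$ differ; since they share the term $\alpha x^r$ we have $p>r$, and $f-g$ has lowest-order term $\gamma x^p$ with $\gamma\neq 0$. Using $f^k-g^k=(f-g)\sum_{i=0}^{k-1}f^i g^{k-1-i}$, where the sum has lowest-order term $k\alpha^{k-1}x^{(k-1)r}$, the polynomial $f^k-g^k$ has lowest-order term $k\alpha^{k-1}\gamma\,x^{p+(k-1)r}$, and hence $\widehat{f^k}-\widehat{g^k}\sim k\alpha^{k-1}\gamma\,(p+(k-1)r)!\,\lambda^{-(p+(k-1)r)-1}$ as $\lambda\to\infty$. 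Inserting $k=n$ and $k=m$ into the telescoped identity, the powers of $\lambda$ on the two sides agree automatically, and equating the leading coefficients (after cancelling the nonzero factor $\alpha^{n+m-1}\gamma$) gives, with $q:=p-r\geq1$,
\[
n\,(q+nr)!\,(mr)!=m\,(q+mr)!\,(nr)!,\qquad\text{i.e.}\qquad \prod_{i=1}^{q}(nr+i)=\frac{m}{n}\prod_{i=1}^{q}(mr+i).
\]
Assuming $n>m$ (no loss, since $H_{n,m}=1/H_{m,n}$ and $n-m$, $m-n$ have the same parity), each of the $q\geq1$ factors on the left is at least the corresponding factor on the right, while $m/n<1$ and the right-hand product is a positive integer; hence the left side is strictly larger, a contradiction. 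Therefore $f\equiv g$.

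The point demanding the most care is the asymptotic bookkeeping of the third paragraph: one must be sure that no cancellation of leading terms occurs when forming the products $\bigl(\widehat{f^n}-\widehat{g^n}\bigr)\widehat{g^m}$ and $\widehat{g^n}\bigl(\widehat{f^m}-\widehat{g^m}\bigr)$. This is precisely why the telescoped form is convenient: once $r=s$ and $\alpha=\beta$ are in force, each factor occurring there has an unambiguous nonzero lowest-order term, hence a genuine leading asymptotic term, and the products behave as computed. The remaining points — the degenerate subcase $r=0$, which the displayed inequality covers without change, and the sign reduction via $-g$ — are routine.
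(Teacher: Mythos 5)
Your proof is correct, and while it hits the same two pivot points as the paper's argument --- first forcing $f$ and $g$ to have the same order of vanishing $r$ at the origin with $(\alpha/\beta)^{n-m}=1$, then a combinatorial inequality ruling out a first discrepancy in a higher coefficient --- it gets there by a genuinely different route. The paper works in the time domain: it rewrites the hypothesis as the convolution identity $f^n*g^m=f^m*g^n$, extracts small-$t$ asymptotics via beta integrals (Lemma \ref{lemaux}), and then proves equality of the Taylor coefficients at $0$ one order at a time by induction, isolating the coefficient of $t^d$ with $d=k(n+m-1)+\ell$ in a multinomial expansion and managing $o(t^d)$ remainders (Lemma \ref{lem2}). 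You stay in the Laplace domain, where transforms of polynomials are exact finite sums in $1/\lambda$, and you replace the induction by a minimal-counterexample argument: the telescoped identity $(\widehat{f^n}-\widehat{g^n})\,\widehat{g^m}=\widehat{g^n}\,(\widehat{f^m}-\widehat{g^m})$ combined with the factorization $f^k-g^k=(f-g)\sum_{i}f^ig^{k-1-i}$ pins down the exact leading term of every factor with no error terms and no multinomial bookkeeping; you also rightly flag and dispose of the one delicate point, namely that no leading-term cancellation can occur in those products. Your final inequality $n\,(q+nr)!\,(mr)!\neq m\,(q+mr)!\,(nr)!$ is, after converting the paper's beta functions to factorials (with $k=r$, $\ell=p$, $q=\ell-k$), literally the same inequality that closes Lemma \ref{lem2}. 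What your version buys is a shorter, self-contained argument tailored to polynomials (and it would extend essentially verbatim to entire functions, cf.\ Remark \ref{entire}); what the paper's version buys is lemmas stated for functions with only finitely many derivatives at $0$, which is the form needed for the proof of Theorem \ref{Mainthm}.
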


For the general case, we shall restrict ourselves to functions
$f$ on $[0,\infty)$
that are right continuous and with left limits at each point
(the so called \cadlag functions in probability theory) and impose smoothness on the right.

We say that $f$ is {\em right analytic} at a point $a$ if the
right derivatives $f^{(i)}(a+)$ exist for all $i \ge 0$ and if there exists
$h>0$ such that, for all $a \le x < a+h$,
\[
f(x)=\sum_{i=0}^{\infty}f^{(i)}(a+)\frac{(x-a)^i}{i!}.
\]
The series on the right also converges
on $a-h<x<a+h$ (see, e.g., \cite[Prop.\ 1.1.1]{REAL}).
By \cite[Cor.\ 1.2.3]{REAL}, 
the function $g(x) :=\sum_{i=0}^{\infty}f^{(i)}(a+)\frac{(x-a)^i}{i!}$
is real analytic on $(a-h,a+h)$.
So $f$ is right analytic at $a$ if and only if there exists a function $g$ 
(depending on $a$)
which is real analytic on $(a-h,a+h)$ for some $h>0$ such that $f(x)=g(x)$ for all $x\in[a,a+h)$. 
The right analyticity only imposes smoothness on the right of a point. A \cadlag and right analytic function $f$ on $[0,\infty)$ can have countably many discontinuous 
points on a compact interval. For example, take 
$$f(x)=\frac{1}{2^n},\text{ if }x\in\left[1-\frac{1}{2^n},1-\frac{1}{2^{n+1}}\right), n=0,1,2,\ldots; \quad f(x)=2, \text{ if }x\geq 1.$$
The $f$ defined above is \cadlag and right analytic on $[0,\infty)$ with
discontinuities at  points  $1-\frac{1}{2^n}$, $n=1,2,\ldots$, and at $1$. 

We state and prove the uniqueness theorem making the additional
assumption that $f$ is nondecreasing. 
We conjecture the theorem remains true even without this assumption.
A strong indication for this is that it is not difficult to extend Theorem 
\ref{poly} from polynomials to entire functions--see Remark \ref{entire}.
However, for the purposes of the original problem, posed in relation to
auction theory, the monotonicity assumption is natural; see Section \ref{auctions}.

\begin{theorem}
\label{Mainthm}
Let $m,n$ be distinct positive integers. 
Suppose that  $f, g$ are nonnegative nondecreasing 
\cadlag functions, both right analytic at every point of $[0,\infty)$ and
of exponential order, and such that $f(x), g(x)>0$ for all $x>0$.
If
\[
H_{n,m}(f,\cdot) = H_{n,m}(g,\cdot),
\]
then $f=g$.
\end{theorem}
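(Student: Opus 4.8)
The plan is to convert the hypothesis into local information at the origin — the right Taylor coefficients of $f$ and $g$ at $0$ — and then to propagate equality along $[0,\infty)$. Cross-multiplying, the hypothesis becomes the analytic identity $\widehat{f^n}(\lambda)\,\widehat{g^m}(\lambda)=\widehat{g^n}(\lambda)\,\widehat{f^m}(\lambda)$ for all large real $\lambda$. The one tool used repeatedly is Watson's lemma: if $h$ is of exponential order and right analytic at $0$ with $h(x)=\sum_{k\ge0}c_kx^k$ on a right neighborhood of $0$, then $\widehat h(\lambda)\sim\sum_{k\ge0}c_k\,k!\,\lambda^{-k-1}$ as $\lambda\to+\infty$, any part of the defining integral over a set bounded away from $0$ contributing only exponentially small terms. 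Since products and differences of right analytic functions are right analytic, this applies to $f^n,f^m,g^n,g^m$ and to the functions introduced below. We may also assume $n>m$, since $H_{m,n}=1/H_{n,m}$.

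\emph{Step 1 (equality near $0$).} Write $f(x)=x^pF(x)$ and $g(x)=x^qG(x)$ near $0$, with $F,G$ real analytic and $F(0),G(0)>0$ (using $f,g\ge0$ and $f,g>0$ off the origin). Watson's lemma gives $\widehat{f^\ell}(\lambda)\sim F(0)^\ell(\ell p)!\,\lambda^{-\ell p-1}$, so comparing leading powers of $\lambda$ in $H_{n,m}(f,\cdot)=H_{n,m}(g,\cdot)$ forces $(n-m)p=(n-m)q$, hence $p=q$, and comparing leading coefficients forces $F(0)=G(0)$. One then proves by induction on $j\ge0$ that the $j$-th Taylor coefficients of $F$ and $G$ at $0$ agree: the full asymptotic expansions turn the identity into a formal power-series identity in $1/\lambda$, and the coefficient of the relevant power involves the two unknown $j$-th coefficients linearly with a nonzero scalar factor, namely a positive multiple of $n\prod_{i=1}^{j}(np+i)-m\prod_{i=1}^{j}(mp+i)$, which is nonzero exactly because $n\ne m$. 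Thus $F=G$, so $f=g$, on some interval $[0,\delta)$ with $\delta>0$.

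\emph{Step 2 (propagation).} Let $s:=\sup\{\delta\ge0:\ f=g\text{ on }[0,\delta)\}$, so $s\ge\delta>0$, and suppose for contradiction that $s<\infty$. Put $u(x):=f(s+x)$, $v(x):=g(s+x)$ and split $\widehat{f^\ell}(\lambda)=I_\ell(\lambda)+e^{-\lambda s}\widehat{u^\ell}(\lambda)$, where $I_\ell(\lambda):=\int_0^s e^{-\lambda x}f(x)^\ell\,dx$ equals $\int_0^s e^{-\lambda x}g(x)^\ell\,dx$ because $f=g$ on $[0,s)$. Inserting this into the identity and cancelling the common $I_nI_m$ and the factor $e^{-\lambda s}$ leaves
\[
I_m(\lambda)D_n(\lambda)-I_n(\lambda)D_m(\lambda)=e^{-\lambda s}\bigl(\widehat{v^n}\,\widehat{u^m}-\widehat{u^n}\,\widehat{v^m}\bigr)(\lambda),\qquad D_\ell:=\widehat{u^\ell}-\widehat{v^\ell}.
\]
The right-hand side is exponentially small, so the asymptotic power series in $1/\lambda$ of the left-hand side vanishes; writing $\mathcal I_\ell,\mathcal D_\ell$ for the asymptotic power series of $I_\ell,D_\ell$, this says $\mathcal I_m\mathcal D_n=\mathcal I_n\mathcal D_m$ in the (integral) ring of formal power series, where $\mathcal I_\ell\ne0$ since its lowest coefficient is $F(0)^\ell(\ell p)!\ne0$. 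Hence either $\mathcal D_n=\mathcal D_m=0$, or both are nonzero. In the first case $u^n=v^n$, hence $u=v$, on a right neighborhood of $0$, i.e.\ $f=g$ on $[s,s+h)$; together with $f=g$ on $[0,s)$ this contradicts the definition of $s$. In the second case one compares lowest-order terms of $\mathcal I_m\mathcal D_n$ and $\mathcal I_n\mathcal D_m$. Because $u(0)=f(s)>0$ and $v(0)=g(s)>0$, the lowest power of $1/\lambda$ present in $\mathcal D_\ell$ has the same exponent for $\ell=n$ and $\ell=m$ — it is governed by the first index at which the right derivatives of $f$ and $g$ at $s$ differ (index $0$ if $f(s)\ne g(s)$) — so matching the two sides forces $(n-m)p=0$, i.e.\ $p=0$, whence $b_0:=f(0)=g(0)=F(0)>0$. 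Matching the lowest coefficients then gives a scalar equation: either $b_0^{\,m}\bigl(f(s)^n-g(s)^n\bigr)=b_0^{\,n}\bigl(f(s)^m-g(s)^m\bigr)$ when $f(s)\ne g(s)$, or $\bigl(f(s)/b_0\bigr)^{n-m}=m/n$ when $f(s)=g(s)$. But $f,g$ nondecreasing gives $f(s),g(s)\ge b_0$, and since $t\mapsto t^n-t^m$ and $t\mapsto t^{n-m}$ are strictly increasing on $[1,\infty)$ (using $n>m$), the first equation forces $f(s)=g(s)$ (a contradiction) and the second is impossible since $m/n<1\le(f(s)/b_0)^{n-m}$. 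This contradiction shows $s=\infty$, i.e.\ $f=g$ on $[0,\infty)$.

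The main obstacle is Step 2: the shifted functions $u,v$ do not satisfy $H_{n,m}(u,\cdot)=H_{n,m}(v,\cdot)$, so the theorem cannot simply be iterated; the residual pieces $I_n,I_m$ must be carried along, and it is precisely at the scalar equations above that the monotonicity hypothesis is indispensable — dropping it leaves genuine solutions with $f(s)\ne g(s)$. By comparison, the details hidden in Step 1 (Watson's lemma with the right uniformity, the expression of the Taylor coefficients of $f^\ell$ in terms of those of $f$, and the non-vanishing of $n\prod_{i=1}^{j}(np+i)-m\prod_{i=1}^{j}(mp+i)$) are routine.
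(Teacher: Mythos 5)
Your proof is correct, and while Step 1 is essentially the paper's argument in a different guise, Step 2 takes a genuinely different route. For Step 1, the paper (Lemmas \ref{lemaux} and \ref{lem2}) extracts the same linear relations between the unknown Taylor coefficients by letting $t\to0$ in the convolution identity $f^n*g^m=f^m*g^n$, producing Beta-function coefficients, whereas you let $\lambda\to+\infty$ in the transform identity via Watson's lemma, producing factorials; the two obstructions agree up to a positive factor (with $k=p$, $\ell=p+j$, the paper's bracket equals $\tfrac{(np)!\,(mp)!}{(np+mp+j+1)!}\bigl[n\prod_{i=1}^{j}(np+i)-m\prod_{i=1}^{j}(mp+i)\bigr]$), so they vanish simultaneously and your nonvanishing claim is the same as theirs. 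The real divergence is at the propagation step. The paper's Lemma \ref{=>} stays entirely in the time domain: assuming $f>g$ on $(s,s+\delta)$ with $\delta<s$, it splits $Q(s+\delta)=0$, $Q=f^n*g^m-f^m*g^n$, at $u=\delta$ and uses monotonicity of $g$ over all of $[0,s+\delta]$ (via $g(s+\delta-u)\ge g(u)$) to force $Q(s+\delta)>0$, a contradiction; this yields $f^{(i)}(s+)=g^{(i)}(s+)$ for all $i$ and then right analyticity closes the argument exactly as in your Case 1. Your version instead works in the Laplace domain through the factorization $I_mD_n=I_nD_m$ modulo exponentially small terms, the integral-domain dichotomy, and the two scalar equations at lowest order; I checked these ($b_0^m(f(s)^n-g(s)^n)=b_0^n(f(s)^m-g(s)^m)$ when $f(s)\ne g(s)$, and $(f(s)/b_0)^{n-m}=m/n$ otherwise) and they are right, including the forced conclusion $p=0$ from matching lowest exponents. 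A notable feature of your route is that monotonicity enters only through $f(s)\ge f(0)$ and $g(s)\ge g(0)$ together with $f(0)=g(0)=b_0>0$, a strictly weaker input than the paper's global inequality $g(s+\delta-u)\ge g(u)$; on the other hand, the paper's propagation step needs no asymptotic-expansion machinery at all. Both proofs are complete.
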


The paper is organized as follows.
Theorems \ref{poly} and \ref{Mainthm} are proved in Section \ref{uni}.
The application to auction theory, whence the problem
originally appeared, is presented in Section \ref{auctions}.
Open questions are summarized in the brief last section.

\section{The uniqueness question}
\label{uni}

We start with a preliminary observation.
For a nonnegative integer $k$,
denote by $\CC^k_0$  the class of functions $f$ that are $k$ times
differentiable at the origin; so $f \in \CC^0_0$ iff $f$ is continuous at $0$.
Let
\[
I(f) := \inf\{k \ge 0:\, f \in \CC^k_0,\, f^{(k)}(0) \neq 0\}.
\]
The observation is that if $f$ and $g$ have finite $I(f)$ and $I(g)$ then
$H_{n,m}(f,\cdot) = H_{n,m}(g,\cdot)$ implies that $I(f)=I(g)$.
We explain this in the following lemma. Before that, we recall the 
definition of the beta function:
\begin{equation}\label{Beta}
\Beta(\alpha,\beta) = \int_0^1 t^{\alpha-1} (1-t)^{\beta-1} dt
= \frac{\Gamma(\alpha)\Gamma(\beta)}{\Gamma(\alpha+\beta)},\quad  \alpha>0, \beta>0,
\end{equation}
noting that $\Gamma(n)=(n-1)!$ when $n$ is a positive integer.

\begin{lemma}
\label{lemaux}
Suppose that $f$ and $g$ are of exponential order
with $I(f)<\infty, I(g)<\infty$.
Let $m, n$ be distinct positive integers. Assume
$H_{n,m}(f,\cdot) = H_{n,m}(g,\cdot)$.
Then $I(f)=I(g)$. 
Let $k=I(f)=I(g)$. If $n-m$ is odd then $f^{(k)}(0) = g^{(k)}(0)$.
If $n-m$ is even then $|f^{(k)}(0)| = |g^{(k)}(0)|$.
\end{lemma}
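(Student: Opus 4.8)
The plan is to extract the leading-order behaviour of $\widehat{f^n}(\lambda)$ as $\lambda\to\infty$ along the real axis. If $I(f)=k$, then near the origin $f(x) = \frac{f^{(k)}(0)}{k!}\,x^k + o(x^k)$, so $f^n(x) = \Big(\frac{f^{(k)}(0)}{k!}\Big)^{n} x^{nk} + o(x^{nk})$ as $x\downarrow 0$. A standard Watson-type (Abelian) lemma for Laplace transforms then gives
\[
\widehat{f^n}(\lambda) \sim \Big(\tfrac{f^{(k)}(0)}{k!}\Big)^{n}\,\frac{\Gamma(nk+1)}{\lambda^{nk+1}}, \qquad \lambda\to\infty.
\]
First I would state and justify this asymptotic: split $\int_0^\infty e^{-\lambda x} f^n(x)\,dx$ into $\int_0^\delta + \int_\delta^\infty$, bound the tail using the exponential-order hypothesis (it is $O(e^{-(\lambda-c)\delta})$, super-polynomially small), and on $[0,\delta]$ use the expansion of $f^n$ together with the elementary computation $\int_0^\infty e^{-\lambda x} x^{p}\,dx = \Gamma(p+1)/\lambda^{p+1}$, the error term being $o(\lambda^{-(nk+1)})$ by dominated convergence after the substitution $x=u/\lambda$. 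The beta-function identity \eqref{Beta} is the convenient bookkeeping device here, since it lets one write ratios of such $\Gamma$-factors cleanly.

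Next I would form the ratio. From the asymptotics,
\[
H_{n,m}(f,\lambda) = \frac{\widehat{f^n}(\lambda)}{\widehat{f^m}(\lambda)} \sim \Big(\tfrac{f^{(k)}(0)}{k!}\Big)^{n-m}\,\frac{\Gamma(nk+1)}{\Gamma(mk+1)}\;\lambda^{-(n-m)k}, \qquad \lambda\to\infty,
\]
and similarly for $g$ with exponent $(n'-m)$ where $k'=I(g)$ and constant involving $g^{(k')}(0)$. Comparing the two expressions for the same function $H_{n,m}(\cdot,\lambda)$: the power of $\lambda$ must match, which forces $(n-m)k = (n-m)k'$, hence $k=k'$ since $n\neq m$; this proves $I(f)=I(g)$. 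Then the leading constants must be equal, giving
\[
\Big(\tfrac{f^{(k)}(0)}{k!}\Big)^{n-m} = \Big(\tfrac{g^{(k)}(0)}{k!}\Big)^{n-m},
\]
i.e. $\big(f^{(k)}(0)\big)^{n-m} = \big(g^{(k)}(0)\big)^{n-m}$. If $n-m$ is odd, the map $t\mapsto t^{n-m}$ is injective on $\R$, so $f^{(k)}(0)=g^{(k)}(0)$; if $n-m$ is even, we only get $|f^{(k)}(0)| = |g^{(k)}(0)|$, since $t$ and $-t$ have the same even power. This matches exactly the dichotomy in the statement.

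The main obstacle I anticipate is the rigor of the Abelian lemma under the stated hypotheses: $f$ is only assumed $k$-times differentiable at the origin (so $f\in\CC^k_0$ with $f^{(k)}(0)\ne 0$) and of exponential order, with no global smoothness or monotonicity. One must be careful that the Taylor expansion of $f^n$ at $0$ is valid to the required order — this follows from the chain/product rule applied to $f\in\CC^k_0$, giving $f^n\in\CC^{nk}_0$ with $(f^n)^{(j)}(0)=0$ for $j<nk$ and $(f^n)^{(nk)}(0) = \frac{(nk)!}{(k!)^n}\big(f^{(k)}(0)\big)^n$ by a multinomial count — and that the remainder, while not necessarily smooth away from $0$, is still $o(x^{nk})$ near $0$ and at worst of exponential order globally, so that its Laplace transform is $o(\lambda^{-(nk+1)})$. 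Handling this remainder cleanly (rather than assuming more than $\CC^{nk}_0$) is the only delicate point; everything else is the routine $\Gamma$-function algebra and the sign analysis above.
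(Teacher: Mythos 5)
Your argument is correct, and it reaches the conclusion by a route that is the Abelian dual of the one in the paper. The paper first invokes injectivity of the Laplace transform to convert the hypothesis into the convolution identity $f^n*g^m=f^m*g^n$, then rescales $(f^n*g^m)(t)=t\int_0^1 f(tu)^n g(t(1-u))^m\,du$, divides by $t^{kn+\ell m+1}$ and lets $t\to0$, so the leading constant comes out as a beta integral $\frac{a^nb^m}{k!^n\,\ell!^m}\Beta(kn+1,\ell m+1)$; matching the two orders of vanishing at $t=0$ gives $kn+\ell m=km+\ell n$, hence $k=\ell$, and matching constants gives $(a/b)^{n-m}=1$. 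You instead stay entirely on the transform side: Watson's lemma yields $\widehat{f^n}(\lambda)\sim(f^{(k)}(0)/k!)^n\Gamma(nk+1)\lambda^{-(nk+1)}$ as $\lambda\to\infty$, and comparing the resulting asymptotics of the ratio $H_{n,m}$ for $f$ and for $g$ forces first the exponents $(n-m)k=(n-m)k'$ and then the constants $(f^{(k)}(0))^{n-m}=(g^{(k)}(0))^{n-m}$, exactly the dichotomy required. The two computations are equivalent via $\Gamma(\alpha)\Gamma(\beta)/\Gamma(\alpha+\beta)=\Beta(\alpha,\beta)$, but your version has the small advantage of not needing Lerch's uniqueness theorem to pass to the convolution identity, while the paper's convolution setup is reused verbatim in the later lemmas. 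Your worry about establishing $f^n\in\CC^{nk}_0$ is unnecessary: you only need $f^n(x)=(f^{(k)}(0)/k!)^n x^{nk}+o(x^{nk})$, which follows by simply raising the Peano expansion $f(x)=\frac{f^{(k)}(0)}{k!}x^k+o(x^k)$ to the $n$-th power, with no differentiability of $f^n$ required; this is also all the paper uses. (The phrase ``exponent $(n'-m)$'' is evidently a typo for $-(n-m)k'$ and does not affect the argument.)
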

\begin{proof}
The assumption $H_{n,m}(f,\cdot) = 
H_{n,m}(g,\cdot)$ is equivalent to
\[
\widehat{f^n}(\lambda) \, \widehat{g^m}(\lambda)
=\widehat{g^n\,}(\lambda) \, \widehat{f^m}(\lambda),\,  \text{ for sufficiently large } \lambda, 
\]
which is further equivalent to 
\begin{equation}
\label{conveq}
{f^n} * {g^m} = {f^m} * {g^n},
\end{equation}
where $*$ denotes convolution of functions on $[0,\infty)$, namely,
$(f*g)(t) = \int_0^t f(t-s) g(s) ds$.
Write the left-hand side as
\begin{equation}
\label{LHSconv}
({f^n} * {g^m}) (t)
= \int_0^t f(s)^n g(t-s)^m ds
= t \int_0^1 f(tu)^n g(t(1-u))^m du.
\end{equation}
Define
\[
 k:=I(f), \quad \ell := I(g),\quad
a:= f^{(k)}(0), \quad b:=f^{(\ell)}(0).
\]
Divide both sides of \eqref{LHSconv} by 
$t^{kn+\ell m+1}$. 
Then, as  $t \to 0$,
\begin{align}
\frac{(f^n * g^m) (t)}{t^{kn+\ell m+1}}
&= \int_0^1 \left(\frac{f(tu)}{t^k}\right)^n 
\left(\frac{g(t(1-u))}{t^\ell}\right)^m du
\nonumber
\\
&\to
\int_0^1  \left(\frac{a u^k}{k!}\right)^n 
\left(\frac{b (1-u)^\ell}{\ell!}\right)^m du
= \frac{a^n b^m}{k!^n \ell!^m} \Beta(kn+1, \ell m+1),  \label{D1}
\end{align}
where $\Beta$ is the beta function; see \eqref{Beta}.
To obtain this, we used the assumption
that the first nonzero derivative of $f$ at zero is the derivative of order $k$,
so that $f(tu)/t^k \to f^{(k)}(0) u^k/k!$ and,
similarly, $g(t(1-u))/t^\ell \to g^{(\ell)}(0) (1-u)^\ell/\ell!$.
Reversing the roles of $n$ and $m$, we obtain
\begin{equation}
\label{D2}
\frac{(f^m * g^n) (t)}{t^{km+\ell n+1}}
\to \frac{a^m b^n}{k!^m \ell!^n} \Beta(km+1, \ell n+1),
\end{equation}
as $t \to 0$.
Comparing \eqref{D1} and \eqref{D2}, and in view of \eqref{conveq}, 
we are forced to conclude that 
\[
p_1:=kn + \ell m ~=~ km +\ell n=:p_2.
\]
Indeed,  by \eqref{conveq}, we have $f^n*g^m = f^m*g^n=h$. The function $h$ satisfies 
$t^{-p_1} h(t) \to C_1$ and $t^{-p_2} h(t) \to C_2$, as $t\to 0$,
where $C_1, C_2$ are the
constants appearing on the right-hand sides of \eqref{D1} and \eqref{D2},
respectively. These constants are nonzero. If $p_1>p_2$ we obtain
$t^{-p_1} h(t)  = t^{p_1-p_2} (t^{-p_1} h(t)) \to 0$.
Hence $C_1=0$,
which is impossible. Similarly, $p_1<p_2$ is impossible, and thus $p_1=p_2$,
whence  $k(n-m) = \ell (n-m)$, and so
\[
k=\ell.
\]
But then $C_1$ and $C_2$ are equal and this entails
$a^m b^n = a^n b^m$, or
\[
(a/b)^{n-m}=1.
\]
If $n-m$ is odd we have $a=b$. If $n-m$ is even we can only deduce
that $|a|=|b|$.
\end{proof}

The next lemma shows equality of higher derivatives.

\begin{lemma}
\label{lem2}
Suppose that $f$ and $g$ are of exponential order
with $I(f)=I(g)=k<\infty$
and $f^{(k)}(0)=g^{(k)}(0)$.
Let $m, n$ be distinct positive integers. If
$H_{n,m}(f,\cdot) = H_{n,m}(g,\cdot)$
then $f^{(\ell)}(0)=g^{(\ell)}(0)$ for all $\ell \ge k$ for which
the two derivatives exist.
\end{lemma}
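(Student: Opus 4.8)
The plan is to argue by induction on $\ell \ge k$, using the equivalent convolution identity \eqref{conveq}, $f^n * g^m = f^m * g^n$. The base case $\ell = k$ is exactly the standing hypothesis $f^{(k)}(0) = g^{(k)}(0)$. For the inductive step I would fix $\ell > k$ for which $f^{(\ell)}(0)$ and $g^{(\ell)}(0)$ both exist (so all lower-order derivatives at $0$ exist as well) and assume $f^{(j)}(0) = g^{(j)}(0)$ for every $j$ with $k \le j < \ell$. Since all derivatives of $f$ and $g$ at $0$ of order $<\ell$ agree — they vanish for orders $<k$ because $I(f)=I(g)=k$, and coincide for $k\le j<\ell$ by the inductive hypothesis — Taylor's formula with Peano remainder gives, as $t \downarrow 0$,
\[
f(t) = P(t) + c_f\, t^\ell + o(t^\ell), \qquad g(t) = P(t) + c_g\, t^\ell + o(t^\ell),
\]
with a common polynomial $P$ whose lowest-order term is $\frac{a}{k!}\,t^k$, where $a := f^{(k)}(0) = g^{(k)}(0) \ne 0$, $c_f := f^{(\ell)}(0)/\ell!$ and $c_g := g^{(\ell)}(0)/\ell!$. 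Setting $d := c_f - c_g$, the goal is to show $d = 0$.

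Next I would substitute these expansions into $f^n * g^m = f^m * g^n$ and read off the coefficient of the appropriate power of $t$. Raising the expansions to the $n$-th and $m$-th powers, and using crucially that $\ell > k$, so that every term quadratic or higher in the corrections beyond $P$ is of strictly higher order, one obtains
\[
f^n = P^n + n\, c_f\, P^{n-1} t^\ell + o\big(t^{(n-1)k+\ell}\big), \qquad g^n = P^n + n\, c_g\, P^{n-1} t^\ell + o\big(t^{(n-1)k+\ell}\big),
\]
and similarly with $n$ replaced by $m$. Plugging into the convolution identity, the terms $P^n * P^m$ and $P^m * P^n$ cancel by commutativity of convolution, and collecting what survives leaves
\[
0 = d \left[ n\,(P^{n-1} t^\ell) * P^m - m\, P^n * (P^{m-1} t^\ell) \right] + o\big(t^{(n+m-1)k+\ell+1}\big), \qquad t \downarrow 0 .
\]
Since $P(t) \sim \frac{a}{k!}\, t^k$ and $t^p * t^q = \Beta(p+1,q+1)\, t^{p+q+1}$, dividing by $t^{(n+m-1)k+\ell+1}$ and letting $t \downarrow 0$ would give
\[
0 = d \left(\tfrac{a}{k!}\right)^{n+m-1} \Big[ n\, \Beta\big((n-1)k+\ell+1,\ mk+1\big) - m\, \Beta\big(nk+1,\ (m-1)k+\ell+1\big) \Big].
\]

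It then remains to verify that the bracketed combination of beta values is nonzero; since $a \ne 0$, this forces $d=0$ and closes the induction. Writing $\Beta(x,y) = \Gamma(x)\Gamma(y)/\Gamma(x+y)$ and noting that $\delta := \ell - k$ is a positive integer in the inductive step, the bracket vanishes if and only if $n\prod_{i=0}^{\delta-1}(nk+1+i) = m\prod_{i=0}^{\delta-1}(mk+1+i)$, which is impossible for distinct positive integers $m$ and $n$: if $k=0$ both products equal $\delta!$ and one is left with $n=m$, while if $k\ge 1$ every factor on the side carrying the larger of $n,m$ strictly exceeds the matching factor on the other side, and the prefactor is larger too. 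I expect the main obstacle to be the asymptotic bookkeeping in the second paragraph — checking that the cross terms and Peano remainders really are $o\big(t^{(n+m-1)k+\ell+1}\big)$ after convolution — and this is precisely where the inductive hypothesis $\ell>k$ is used; once that is in place, the non-vanishing of the beta combination is a short computation.
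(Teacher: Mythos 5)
Your proof is correct and follows essentially the same route as the paper's: induction on $\ell$, Taylor expansion of $f$ and $g$ to order $\ell$ substituted into $f^n * g^m = f^m * g^n$, extraction of the coefficient of $t^{k(n+m-1)+\ell+1}$, and the same non-vanishing check on the bracket $n\Beta(k(n-1)+\ell+1,km+1)-m\Beta(k(m-1)+\ell+1,kn+1)$. Your use of commutativity of convolution to cancel the $(a,b)$-independent part is just a tidier packaging of what the paper does via the symmetry $Q_{n,m}=Q_{m,n}$, $D_{n,m}=D_{m,n}$.
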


\begin{proof}
We prove that, for all $\ell \ge k$, $f, g \in \CC^\ell_0 \Rightarrow f^{(\ell)}(0)
=g^{(\ell)}(0)$, by induction on $\ell$.
Fix an integer $\ell > k$. The induction hypothesis is that
\[
\text{for all integers $j \in [k, \ell -1]$}, ~ 
f, g \in \CC^j_0 \Rightarrow f^{(j)}(0)=g^{(j)}(0).
\]
Assume that $f, g \in \CC^\ell_0$. 
To complete the inductive step, it remains to show 
that $f^{(\ell)}(0)=g^{(\ell)}(0)$.
For $k \le j \le \ell-1$, use the abbreviations
\[
c_j := f^{(j)}(0)/j!, \quad
a :=  f^{(\ell)}(0)/\ell!,\quad b :=  g^{(\ell)}(0)/\ell!,
\]
to write
\[
f(x)=\sum_{i=k}^{\ell-1} c_i x^i + a x^\ell + f_1(x),
\quad 
g(x)=\sum_{i=k}^{\ell-1} c_i x^i + b x^\ell + g_1(x),
\]
where $f_1(x) = o(x^\ell)$ and $g_1(x) = o(x^\ell)$ as $x \to 0$.
We will show that $a=b$.
We have
\begin{align}
\label{nmdev}
\frac{f^n*g^m(t)}{t}&=\int_0^1 f(tu) ^n g(t(1-u))^m du &\nonumber
\\&=
\int_0^1
\left( \sum_{i=k}^{\ell-1} c_i u^i t^i +  
a u^\ell t^\ell + f_1(ut) \right)^n &\nonumber
\\
&\quad\quad\quad\quad\quad\quad\quad\times \left(\sum_{i=k}^{\ell-1}c_i (1-u)^i t^i +
b(1-u)^\ell t^\ell+g_1((1-u)t) \right)^m du.&
\end{align}
Note that the
integrand in the last integral of \eqref{nmdev} is a product of $n+m$ terms. 
Let\footnote{Ignoring for the moment the terms $f_1$ and $g_1$,
so that the integrand is a polynomial, we can easily see that the term $t^d$
of this polynomial has a coefficient that depends on $a$ or $b$,
whereas all smaller degree terms do not.} 
\[
d=k(n+m-1)+\ell.
\]
After multiplication and integration, we shall keep track of the 
monomial terms of degree at most $d$ and 
combine everything else into terms of
order $o(t^d)$. Notice that if $f_1$ or $g_1$ is 
involved in the multiplication and integration, 
the resulting term must be of order $o(t^d).$ 
This  means that if we keep track of the 
monomial terms of degree at most $d$ then
$f_1$ and $g_1$ are not involved. 
These observations allow us to  write
\[
\frac{f^n*g^m(t)}{t} = P_{n,m}(t) + o(t^d).
\]
Note that $P_{n,m}(t)$ can be obtained as follows:  set $f_1$ and $g_1$ to zero
in the last integral of \eqref{nmdev}, integrate so that we
obtain a polynomial in $t$ of degree $n\ell+m\ell$,
and keep only the monomials up to power $t^d$.
We now split $P_{n,m}(t)$ into a polynomial $Q_{n,m}(t)$ of degree at most $d-1$
and a monomial  of degree $d$ whose coefficient is split into two parts:
\[
P_{n,m}(t) = Q_{n,m}(t)+ (C_{n,m}(a,b)+D_{n,m}) t^d.
\]
The first coefficient $C_{n,m}(a,b)$ contains all terms that depend
on $a$ or $b$. Explicitly,
\begin{align}
C_{n,m}(a, b) t^d 
&=\int_0^1 a u^\ell t^\ell
{n\choose 1} (c_k u^k t^k )^{n-1} (c_k (1-u)^k t^k)^m du&\nonumber\\ 
&\quad \quad \quad \quad \quad \quad +
\int_0^1 b (1-u)^\ell t^\ell
{m\choose 1} (c_k (1-u)^k t^k)^{m-1} (c_k u^k t^k)^n du&\nonumber
\\ 
&=\frac{t^{k(n+m-1)+\ell}}{\ell!(k!)^{n+m-1}}\int_0^1
\left(a nu^{k(n-1)+\ell}(1-n)^{km}+b m(1-u)^{k(m-1)+\ell}u^{kn}\right)du&\nonumber
\\
&= \frac{t^d}{\ell!(k!)^{(d-\ell)/k}}
\big(a n\Beta(d-km+1,\, km+1)
+b m\Beta(d-kn+1,\ kn+1)\big).&
\label{Cab}
\end{align}
The coefficient $D_{n,m}$ is obtained as the coefficient in $t^d$
when we set $a$ and $b$ to zero.
In other words, $D_{n,m}$ is the coefficient of $t^d$ in the following 
polynomial (in $t$)
\[
\int_0^1
\left( \sum_{i=k}^{\ell-1} c_i u^i t^i  \right)^n
\left(\sum_{i=k}^{\ell-1}c_i (1-u)^i t^i  \right)^m du.
\]
Notice that $Q_{n,m}(t)$ does not involve $a$ or $b$ either, 
because when $a$ or $b$ is involved in the 
multiplication and integration, 
the resulting term 
must be at least of order $t^d.$ So $D_{n,m}$ is the 
coefficient of $t^{d}$ in the above polynomial.
By symmetry, $D_{n,m}=D_{m,n}$, $Q_{n,m}=Q_{m,n}$.
Reversing the roles of $m$ and $n$ we obtain
\[
\frac{f^m*g^n(t)}{t} = P_{m,n}(t) + o(t^d)
= Q_{m,n}(t)+ (C_{m,n}(a, b)+D_{m,n}) t^d + o(t^d),
\]
as $t \to 0$. The assumptions imply that $f^n * g^m  = f^m * g^n$.
We thus have
\[
Q_{n,m}(t)+ (C_{n,m}(a, b)+D_{n,m}) t^d + o(t^d)
= Q_{m,n}(t)+ (C_{m,n}(a, b)+D_{m,n}) t^d + o(t^d),
\]
in a neighbourhood of $0$.
Since $D_{n,m}=D_{m,n}, Q_{n,m}=Q_{m,n}$, 
we obtain
\[
C_{n,m}(a,b)
=
C_{m,n}(a,b).
\]
Looking at the expression for $C_{n,m}$ from 
equation \eqref{Cab} we further obtain
\[
(a-b) \big[n\Beta(k(n-1)+\ell+1, km+1)
-m \Beta(k(m-1)+\ell+1, kn+1)\big]=0.
\]
To conclude that $a=b$ we only have to show that 
the coefficient in the bracket is nonzero.
The ratio of the two terms in the bracket is
\begin{align*}
\frac{n\Beta(k(n-1)+\ell+1, km+1)}{m \Beta(k(m-1)+\ell+1, kn+1)}
&=
\frac{n}{m} \frac{(km)!}{(kn)!} \frac{(kn+\ell-k)!}{(km+\ell-k)!}&
\\
&= \frac{n}{m} \frac{(l-k+1)(l-k+2) \cdots (l-k+kn)}{(l-k+1)(l-k+2) \cdots (l-k+km)}.&
\end{align*}
Since $n \neq m$, assuming that $n>m$ we see that this ratio is strictly bigger 
than $1$. Similarly, when $n<m$ the ratio is strictly smaller than $1$.
\end{proof}
We complement Lemma \ref{lem2} with the comparison of derivatives of all orders at zero.
\begin{corollary}
\label{allorder}
Suppose that $f$ and $g$ are of exponential order.
Let $m, n$ be distinct positive integers. 
Suppose
$H_{n,m}(f,\cdot) = H_{n,m}(g,\cdot)$. Assume $k=I(f)=I(g)<\infty.$ 
If $f^{(k)}(0)=g^{(k)}(0)$, then $f^{(j)}(0)=g^{(j)}(0)$ 
for all $j \ge 0$ for which the
two derivatives exist.
If $f^{(k)}(0)=-g^{(k)}(0)$, then $f^{(j)}(0)=-g^{(j)}(0)$ 
for all $j \ge 0$ for which the
two derivatives exist.
\end{corollary}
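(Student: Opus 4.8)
The plan is to obtain the corollary as a short bookkeeping consequence of Lemmas~\ref{lemaux} and~\ref{lem2}, using an elementary sign‑flip to handle the minus case.

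First I would dispose of the derivative orders below $k$. If $0\le j<k$ and both $f,g\in\CC^j_0$, then, by the definition of $I$ and the assumption $I(f)=I(g)=k$, necessarily $f^{(j)}(0)=g^{(j)}(0)=0$; so nothing needs to be proved for $j<k$. For $j\ge k$, in the case $f^{(k)}(0)=g^{(k)}(0)$ the equality $f^{(j)}(0)=g^{(j)}(0)$ is exactly the conclusion of Lemma~\ref{lem2}, whose hypotheses ($I(f)=I(g)=k<\infty$, $f^{(k)}(0)=g^{(k)}(0)$, $H_{n,m}(f,\cdot)=H_{n,m}(g,\cdot)$) are all in force. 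This settles the first assertion.

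For the second assertion I would introduce $-g$ and record that $(-g)^n=(-1)^n g^n$ and $(-g)^m=(-1)^m g^m$, hence $\widehat{(-g)^n}=(-1)^n\widehat{g^n}$, $\widehat{(-g)^m}=(-1)^m\widehat{g^m}$, and so $H_{n,m}(-g,\cdot)=(-1)^{n-m}H_{n,m}(g,\cdot)$. Two cases arise according to the parity of $n-m$. If $n-m$ is even, then $H_{n,m}(-g,\cdot)=H_{n,m}(g,\cdot)=H_{n,m}(f,\cdot)$, while $I(-g)=I(g)=k$ and $(-g)^{(k)}(0)=-g^{(k)}(0)=f^{(k)}(0)$; applying the already‑established first assertion to the pair $(f,-g)$ then gives $f^{(j)}(0)=(-g)^{(j)}(0)=-g^{(j)}(0)$ for all $j\ge0$ at which the two derivatives exist. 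If $n-m$ is odd, Lemma~\ref{lemaux} forces $f^{(k)}(0)=g^{(k)}(0)$, which together with the standing hypothesis $f^{(k)}(0)=-g^{(k)}(0)$ would give $f^{(k)}(0)=0$, contradicting $I(f)=k$; so in the odd case the hypothesis of the second assertion is never satisfied and the statement holds vacuously.

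I do not expect a genuine obstacle here, since the corollary is essentially a wrapper around the two lemmas. The only point demanding attention is the parity split: the sign‑flip returns us to a true instance of Lemma~\ref{lem2} only when $n-m$ is even, and for odd $n-m$ one must instead invoke Lemma~\ref{lemaux} to see that the minus hypothesis is incompatible with $f$ and $g$ having a nonzero $k$‑th derivative at the origin.
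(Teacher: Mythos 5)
Your proof is correct and follows essentially the same route as the paper's: the plus case is a direct application of Lemma~\ref{lem2}, and the minus case is handled by replacing $g$ with $-g$, after using Lemma~\ref{lemaux} to see that $n-m$ must be even (equivalently, that the minus hypothesis is vacuous when $n-m$ is odd). Your version merely makes explicit two points the paper leaves implicit — the orders $j<k$ vanish by the definition of $I$, and $H_{n,m}(-g,\cdot)=(-1)^{n-m}H_{n,m}(g,\cdot)$ — so no further changes are needed.
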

\begin{proof}
If $f^{(k)}(0)=g^{(k)}(0)$, by Lemma \ref{lem2}, 
$f^{(j)}(0)=g^{(j)}(0)$ for all $j \ge k$ and 
hence for all $j\geq 0$ for which the
derivatives exist.
If $f^{(k)}(0)=-g^{(k)}(0)$, by Lemma \ref{lemaux}, 
$n-m$ must be even. Then $H_{n,m}(f,\cdot)=H_{n,m}(-g,\cdot)$.
Using $f^{(k)}(0)=(-g)^{(k)}(0)$ and Lemma \ref{lem2}, 
$f^{(j)}(0)=(-g)^{(j)}(0)$ for any $j \ge 0$ for which the
derivatives exist.
\end{proof}

\begin{proof}[\bf\em Proof of Theorem \ref{poly}]
Since $f,g$ are polynomials they are infinitely differentiable and are of
exponential order. Moreover, $I(f)<\infty, I(g)<\infty$. 
By Lemma \ref{lemaux}, $I(f)=I(g)=: k$, say.
Moreover, we have $f^{(k)}(0)=g^{(k)}(0)$, if $n-m$ is odd;
$|f^{(k)}(0)|=|g^{(k)}(0)|$, if $n-m$ is even.  
Suppose first that $n-m$ is odd. 
By Corollary \ref{allorder}, $f^{(j)}(0)=g^{(j)}(0)$ for all $j \ge 0$.
Since polynomials are determined by their derivatives of all 
orders at zero, we have $f$ identical to $g$.
Suppose next that $n-m$ is even. We have two possibilities, i.e., either 
$f^{(k)}(0)=g^{(k)}(0)$ or $f^{(k)}(0)=-g^{(k)}(0)$. Consequently, we have 
either $f^{(j)}(0)=g^{(j)}(0)$ for all $j \ge 0$, 
or $f^{(j)}(0)=-g^{(j)}(0)$ for all $j \ge 0$. 
Hence $f$ is identical to $g$ or identical to $-g$.
\end{proof}

\begin{remark}
\label{entire}
The conclusion of Theorem \ref{poly} remains true if we replace the assumption
that $f, g$ be polynomials by the assumption that they can be analytically extended
to entire functions on the complex plane. In this case, $f,g$ are equal to their
Taylor series which are constructed in the same way for both $f,g$ by the derivatives of all orders at $0$.
By Corollary \ref{allorder}, $f,g$ have the same derivatives of any order
at $0.$ So $f=g.$
So, for example, we know that there is a unique entire function
$f$ such that 
$H_{2,1}(f, \lambda) = 2(\lambda^2+1)/\lambda(\lambda^2+4)$,
and this is, as can be checked, $f(t)= \sin t$.
\end{remark}

We now aim at proving Theorem \ref{Mainthm}.
We need the  preliminary result of Lemma \ref{=>} below.
This lemma is inspired by the approach taken in \cite{LX19}. 

\begin{lemma}\label{=>}
Suppose that $f$ and $g$ are of exponential order, 
\cadlag and nondecreasing
with $f(x)>0, g(x)>0$ for any $x>0$. Let $m, n$ be distinct positive
integers and assume that  $H_{n,m}(f,\cdot)=H_{n,m}(g,\cdot)$.  
Assume further that there exists $a>0$ such that 
$f(x)=g(x)$ for any $x\in[0,a)$ and that $f^{(i)}(a+)$, $g^{(i)}(a+)$ exist
for some $i \ge 0$.
Then $f^{(i)}(a+)=g^{(i)}(a+)$.
\end{lemma}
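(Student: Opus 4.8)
The plan is to mimic the proof of Lemma \ref{lem2}, but now comparing the behaviour of $f^n*g^m$ and $f^m*g^n$ near the point $t=a$ rather than near $t=0$. As in \eqref{conveq}, the hypothesis $H_{n,m}(f,\cdot)=H_{n,m}(g,\cdot)$ is equivalent to $f^n*g^m=f^m*g^n$ as functions on $[0,\infty)$. I would proceed by induction on $i$: assuming $f^{(j)}(a+)=g^{(j)}(a+)$ for all $j<i$ (the base case $i=0$ being handled the same way, and in fact the case where some lower derivatives fail to exist being vacuous), I aim to show $f^{(i)}(a+)=g^{(i)}(a+)$.

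The key step is to extract an asymptotic expansion of $(f^n*g^m)(t)$ as $t\downarrow a$. Split the convolution integral $\int_0^t f(t-s)g(s)^? \,ds$ — more precisely write $(f^n*g^m)(t)=\int_0^t f(s)^n g(t-s)^m\,ds$ — into the piece $\int_0^{t-a}$ (equivalently the part where the argument of $g$ stays $\ge a$ or similar) and the remaining piece. Since $f=g$ on $[0,a)$, the contribution from the region where all arguments lie in $[0,a)$ is symmetric in the roles of $(n,m)$ versus $(m,n)$ and cancels in the difference $f^n*g^m-f^m*g^n$. The surviving contribution comes from the region where one factor has argument near or past $a$; there I would substitute the one-sided Taylor expansion $f(a+\xi)=\sum_{j=0}^{i}f^{(j)}(a+)\xi^j/j! + o(\xi^i)$ (valid for $\xi\downarrow 0$ because $f^{(i)}(a+)$ exists), and likewise for $g$. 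Writing $t=a+\delta$ and expanding in powers of $\delta$, the coefficients of $\delta^0,\dots,\delta^{i-1}$ involve only the matched lower-order data and the common values $f^{(j)}(a+)=g^{(j)}(a+)$, $j<i$, so they are equal on both sides; the first potentially asymmetric term occurs at an identifiable power of $\delta$, and its coefficient is, up to a nonzero Beta-function factor, a constant multiple of $\big(f^{(i)}(a+)-g^{(i)}(a+)\big)$ times a bracket of the form $n\,\Beta(\cdots)-m\,\Beta(\cdots)$. Equating the two asymptotic expansions (legitimate since $f^n*g^m=f^m*g^n$ identically) forces this coefficient to vanish.

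Two points need care. First, I must confirm that the Beta-function bracket multiplying $f^{(i)}(a+)-g^{(i)}(a+)$ is nonzero; this is exactly the same computation as at the end of the proof of Lemma \ref{lem2}, where the ratio of the two Beta terms was shown to be $\neq 1$ because $n\neq m$, and I expect the identical monotonicity-of-a-telescoping-ratio argument to apply verbatim. Second — and this is the main obstacle — I must handle the non-polynomial, genuinely \cadlag nature of $f$ and $g$ away from the expansion point: on the region of integration where arguments lie in $(0,a)$, $f=g$ gives exact cancellation and no smoothness is needed, but I must make sure the $o(\cdot)$ remainders from the one-sided Taylor expansions near $a$ integrate to $o(\delta^{d})$ at the relevant order $d$, uniformly over the shrinking region, and that boundedness of $f,g$ on $[0,a+1]$ (which follows from monotonicity) controls all cross terms. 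The nonnegativity and monotonicity of $f,g$, together with $f,g>0$ on $(0,\infty)$, ensure the leading coefficients are genuinely the ones claimed and that no degenerate cancellation hides the $f^{(i)}(a+)-g^{(i)}(a+)$ term. Once the coefficient of the first asymmetric power of $\delta$ is shown to be a nonzero multiple of $f^{(i)}(a+)-g^{(i)}(a+)$, the conclusion $f^{(i)}(a+)=g^{(i)}(a+)$ is immediate, completing the induction.
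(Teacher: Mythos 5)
Your overall strategy---evaluate the identity $f^n*g^m=f^m*g^n$ at $t=a+\delta$, use the exact cancellation coming from $f=g$ on $[0,a)$, and extract the leading contribution of $f-g$ near $a$ from the one-sided Taylor expansion---is the right starting point, and it is indeed how the paper localizes the problem at $a$. But as written the proposal has a genuine gap, and it is not where you think it is. After cancelling the symmetric part, the surviving region of integration is $\{s\in(0,\delta)\}\cup\{s\in(a,a+\delta)\}$, where \emph{one} factor has argument in $(a,a+\delta)$ (covered by your expansion at $a+$) but the \emph{other} factor has argument in $(0,\delta)$, i.e.\ near the origin. Under the hypotheses of this lemma $f$ and $g$ are only \cadlag, nondecreasing and positive on $(0,\infty)$; no differentiability at $0$ (nor anywhere except right-differentiability at $a$) is assumed. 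So there is no Taylor expansion of $g$ at $0$ with which to identify ``the first asymmetric power of $\delta$,'' and the critical coefficient is not a bracket $n\Beta(\cdots)-m\Beta(\cdots)$ to which the end of Lemma \ref{lem2} applies verbatim: even if $g(u)\sim c_k u^k/k!$ were available, the two candidate terms would carry different powers of $\delta$ (namely $\delta^{km+i+1}$ and $\delta^{kn+i+1}$) and different constants $c_k^m g(a+)^{n-1}$ versus $c_k^n g(a+)^{m-1}$, so the Beta-ratio computation from Lemma \ref{lem2} is not the relevant nonvanishing argument here.

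The paper's proof sidesteps all of this with a sign argument that needs no regularity away from $a$. From the Taylor expansion at $a+$, if the first disagreeing right derivative satisfies (say) $f^{(i_0)}(a+)>g^{(i_0)}(a+)$, then $f>g$ on some $(a,a+\delta)$ with $\delta<a$. Writing $Q(a+\delta)=I_1+I_2$ with $I_1=\int_0^\delta$ and $I_2=\int_\delta^{a+\delta}$, one bounds $I_1$ strictly from below by the same integrand with $f$ replaced by $g$, using $f>g$ on $(a,a+\delta)$ together with the monotonicity inequality $g(a+\delta-u)\ge g(u)$ (valid because $\delta<a$); the piece $I_2$ is computed exactly since $f=g$ on $[0,a)$; and the two lower-bound/exact integrals cancel by the symmetry $u\mapsto a+\delta-u$. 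This yields $Q(a+\delta)>0$, contradicting $Q\equiv 0$. In effect, the nonvanishing you are trying to establish through Beta functions is supplied by monotonicity ($g(u)\le g(a+\delta -u)$ on the surviving region), which is exactly the hypothesis your expansion-based argument never invokes. To repair your proof you would either have to add smoothness assumptions at $0$ that the lemma does not grant, or replace the coefficient-extraction step by this monotonicity comparison---at which point you have reproduced the paper's argument.
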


\begin{proof}
We first prove that $f(a+)=g(a+)$.
Suppose this is not the case. Without loss of generality,
suppose that $f(a+)-g(a+) > 0$.
Then there is positive $\delta$ such that 
\begin{equation}\label{delta}
f(x) - g(x)>0,
\text{ for all } a < x < a+\delta.
\end{equation}
We can assume that $\delta < a$ (else replace $\delta$ by its minimum with $a$).
The assumption that $H_{n,m}(f,\cdot)=H_{n,m}(g,\cdot)$ implies that
\[
Q:= f^n * g^m - f^m * g^n = 0.
\]
That is, $Q$, is identically equal to $0$.
Suppose, without loss of generality, that $n>m$ and write
\begin{align*}
0=Q(a+\delta) &= \left(\int_0^\delta + \int_\delta^{a+\delta}\right)
\left[f(a+\delta-u)^n g(u)^m - g(u)^n f(a+\delta-u)^m\right]\, du
\end{align*}
Let $I_1, I_2$ denote the two integrals.
We have
\[
I_1 = \int_0^\delta f(a+\delta-u)^m  g(u)^m
\left[f(a+\delta-u)^{n-m} - g(u)^{n-m}\right]\, du.
\]
The quantity in the bracket is nonnegative because,
when $0<u<\delta$, we have $a<a+\delta-u<a+\delta$ and, by \eqref{delta},
\[
f(a+\delta-u) > g(a+\delta-u);
\]
furthermore, since we have chosen $\delta < a$ we also have 
$a+\delta-u > a > \delta > u$ and so, by the
monotonicity of $g$, $g(a+\delta-u) \ge g(u)$;
these inequalities show the nonnegativity of the bracketed term.
Hence the integrand in $I_1$
is bounded below by
$g(a+\delta-u)^m  g(u)^m
\left[f(a+\delta-u)^{n-m} - g(u)^{n-m}\right]$.
Hence
\begin{align}
I_1 & > \int_0^\delta g(a+\delta-u)^m  g(u)^m
\left[f(a+\delta-u)^{n-m} - g(u)^{n-m}\right] du\nonumber
\\
&> \int_0^\delta g(a+\delta-u)^m  g(u)^m
\left[g(a+\delta-u)^{n-m} - g(u)^{n-m}\right] du \label{I1}.
\end{align}
On the other hand, since $f=g$ on $[0,a)$,
\begin{align}
I_2 &  = \int_\delta^{a+\delta} f(a+\delta-u)^m  g(u)^m
\left[f(a+\delta-u)^{n-m} - g(u)^{n-m}\right] du \nonumber
\\
&= \int_\delta^{a+\delta} g(a+\delta-u)^m  g(u)^m
\left[g(a+\delta-u)^{n-m} - g(u)^{n-m}\right] du. \label{I2}
\end{align}
It is easy to see that the last integral in \eqref{I1} and the last integral in \eqref{I2}
add up to zero, obtaining that $I_1+I_2 >0$.
So $0=Q(a+\delta)> 0$, which is a  contradiction.

Having proved that $f(a+)=g(a+)$ we now show
that $f^{(i)}(a+)=g^{(i)}(a+)$ for all $i \ge 1$ (provided the
two derivatives exist).
Assume that this is not the case. Let $i_0 \ge 1$ be the least $i$
such that $f^{(i)}(a+) \neq g^{(i)}(a+)$. 
We then have
\[
f^{(i_0)}(a+) \neq g^{(i_0)}(a+), \quad f^{(j)}(a+) = f^{(j)}(a+),
\text{
for all $j=0,\ldots,i_0-1$.}
\]
We can assume, without loss of generality, that
$f^{(i_0)}(a+) > g^{(i_0)}(a+)$.
By Taylor's theorem we then obtain that there is $\delta>0$ 
(which can be taken to be smaller than $a$) such that 
\eqref{delta} holds.
Using exactly the same arguments we arrive at a contradiction.
We thus conclude the proof of the lemma.
\end{proof}

We now pass on to the proof of the main theorem.
\begin{proof}[\bf\em Proof of Theorem \ref{Mainthm}]
If  $f^{(i)}(0)=0$ for all $i\ge 0$ then,
by right analyticity, 
there exists $a>0$ such that $f(x)=0$ for 
all  $x\in[0,a)$. This is in contradiction to the assumption 
that $f(x), g(x)>0$ for all $x>0$. Hence $f^{(j)}(0) \neq 0$ for some $j$.
Similarly, $g^{(j)}(0) \neq 0$ for some $j$. 
 As $f,g$ are nonnegative functions, applying Lemma 
\ref{lemaux} and Corollary \ref{allorder}, we have 
\[
f^{(i)}(0)=g^{(i)}(0), \quad  i\geq 0.
\]
Due to right  analyticity, there exists $a>0$ 
such that $f(x)=g(x)$ for any $x\in[0,a)$. Let 
\[
A:=\sup\{a: f(x)=g(x) \text{ for all } \ x\in[0,a)\}.
\]
Assume that $A<\infty.$ By Lemma \ref{=>} and 
right  analyticity 
\[
f^{(i)}(A)=g^{(i)}(A), \quad i\geq 0.
\]
Again by right analyticity, there exists $h>0$ 
such that $f(x)=g(x)$ for any $x\in[A,A+h)$. This fact is 
in contradiction to the definition of $A$. So we have 
$A=\infty$ which means $f(x)=g(x) $ for all  $x\geq 0.$
\end{proof}

%%%%%%%%%%%%
%%%%%%%%%%%%

\section{An application: the identification problem in auction theory}
\label{auctions}
There are $N$ bidders for a single item. Bidder $i$ bids $X_i$ units of money.
We assume that $X_1, \ldots, X_N$ are random variables. 
It is important to assume that they are not 
independent because there is a tacit common understanding about
the value of the item. A simple model for this situation, favored by economists 
(see  \cite{LX19}), requires that
\begin{equation*}
X_i = X^* + \epsilon_i, \quad i =1,\ldots, N,
\end{equation*}
where $X^*$ is a random variable representing the common understanding
of the item value. In auction theory, $X^*$ is called ``unobserved heterogeneity''.
The random variable $\epsilon_i$ is the additional value of the item as perceived by
bidder $i$. It is called the ``idiosyncratic part'' of the bid.
Since the bidders act independently, it is reasonable to assume that 
$\epsilon_1, \ldots, \epsilon_N$ are independent random variables.
We also assume that they are independent of $X^*$.
Moreover, we assume that bidders behave identically which means that
the idiosyncratic parts have a common distribution denoted by
\[
F(x) = \P(\epsilon \le x).
\]
The identification problem appearing in auction theory  \cite{LX19} is this: 
Given the distributions of
the two highest bids can we find the distribution of $\epsilon$?
 For more information on the identification problem in auction theory, we refer to, among others, \cite{LV98,LPV00,K11,AH02,GL18,L18, HMS13}.

If $a=(a_1, \ldots, a_N)$ is a finite sequence of real numbers,
let $(a_{(1)}, \ldots, a_{(N)})$ be the sequence obtained by putting
the elements of $a$ in increasing order, that is,
\[
\{a_1, \ldots, a_N\} = \{a_{(1)}, \ldots, a_{(N)}\}, 
\quad a_{(1)} \le  \cdots \le a_{(N)}.
\]
The notation is common in probability theory and mathematical statistics;
see, e.g., \cite[p.\ 321]{RES}.
Thus, $X_{(N)}$ represents the highest and $X_{(N-1)}$ the second highest bid.

The question above then becomes:
if we know the distributions of $X_{(N-1)}$ and $X_{(N)}$ can we find $F$?
Quite clearly, knowledge of the distribution of $X_N$ (which is the same as the
distribution of $X_{N-1}$)  does not imply knowledge of $F$. 
The catch here is that we have information about the highest and second highest bid,
rather than two arbitrary bids; and this is what can possibly lead to  an  affirmative
answer.

To give evidence for the fact that knowledge of the distributions of
$X_{(N)}$ and $X_{(N-1)}$ uniquely specify the distribution of $\epsilon$
we look at a very simple  model:

\paragraph{Example 1}
Suppose $\epsilon$ is an exponential random variable,
that is, $\P(\epsilon>x) = e^{-\theta x}$, $x>0$, with unknown parameter $\theta$.
It is known (and not difficult to see, thanks to the so-called memoryless property
enjoyed  by the exponential random variable) that 
\begin{equation}
\label{eee}
\P(\epsilon_{(N)} > x) = \P(\epsilon_{(N-1)}+\eta > x), \quad x > 0,
\end{equation}
where $\eta$ is an independent copy of $\epsilon$;
see \cite{REN} for a more general version
of this result.  
Since  $X_{(i)} = X^* + \epsilon_{(i)}$, for all $i$, we have
\[
\E e^{-\lambda X_{(N)}}
= \E e^{-\lambda X^*}\, \E e^{-\lambda \epsilon_{(N)}}
= \E e^{-\lambda X^*}\, \E e^{-\lambda \epsilon_{(N-1)}} \E e^{-\lambda \eta}
=  \E e^{-\lambda X_{(N-1)}} \E e^{-\lambda \epsilon}.
\]
We can thus find the Laplace transform of $\epsilon$, and thus the unknown
parameter $\theta$, by dividing the Laplace tranform of 
$X_{(N)}$ with that of $X_{(N-1)}$.
However, this simple argument is possible only for
this simple model, due to the fact that the only continuous random variable with memoryless property 
(and thus the only random variable which gives rise to \eqref{eee}) is the exponential
random variable.

In general, the problem is not as trivial. Theorem \ref{Mainthm} 
answers the identification question affirmatively under some conditions.
We explain this below.

\begin{theorem}
\label{aucthm}
Let $X^*, \epsilon_1, \ldots, \epsilon_N$ be independent positive random variables
where the $\epsilon_i$, $i=1,\ldots,N$ have a common (unknown) 
cumulative distribution function $F$ such that $F$ is right analytic and
$F(x) > 0$ for all $x>0$.
Let $X_{(N)}$, respectively $X_{(N-1)}$, 
be the two largest, respectively second largest, of 
the random variables $X_i=X^*+\epsilon_i$, $i=1,\ldots, N$.
Set
\[
K(F,\lambda) := \frac{\E e^{-\lambda X_{(N)}}}{\E e^{-\lambda X_{(N-1)}}},
\quad \lambda > 0.
\]
Then the function $K(F, \cdot)$ uniquely determines $F$.
\end{theorem}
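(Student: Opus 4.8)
The plan is to express $K(F,\cdot)$ explicitly as a function of $H_{N,N-1}(F,\cdot)$ and then invoke Theorem \ref{Mainthm}. Throughout, $N\ge 2$, so that a second largest order statistic exists and $n:=N$, $m:=N-1$ are distinct positive integers.

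First I would strip off the unobserved heterogeneity $X^*$. Since $X^*$ is added to every $X_i$, it does not affect the ordering, so $X_{(j)}=X^*+\epsilon_{(j)}$, where $\epsilon_{(1)}\le\cdots\le\epsilon_{(N)}$ are the order statistics of $\epsilon_1,\ldots,\epsilon_N$. Using the independence of $X^*$ from the $\epsilon_i$ and the fact that $\E e^{-\lambda X^*}\in(0,1]$ for $\lambda>0$, the factor $\E e^{-\lambda X^*}$ cancels in the ratio, leaving
\[
K(F,\lambda)=\frac{\E e^{-\lambda\epsilon_{(N)}}}{\E e^{-\lambda\epsilon_{(N-1)}}},\qquad \lambda>0.
\]

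Next I would rewrite both Laplace transforms in terms of powers of $F$. For any nonnegative random variable $Y$ with distribution function $G$, Fubini's theorem gives $\E e^{-\lambda Y}=\lambda\int_0^\infty e^{-\lambda x}\P(Y\le x)\,dx=\lambda\,\widehat G(\lambda)$ for $\lambda>0$. The distribution function of $\epsilon_{(N)}$ is $F^{N}$, and a standard order-statistics computation (the event $\{\epsilon_{(N-1)}\le x\}$ is that at least $N-1$ of the $\epsilon_i$ lie below $x$) gives the distribution function of $\epsilon_{(N-1)}$ as $G_{N-1}=N F^{N-1}-(N-1)F^{N}$. Hence
\[
K(F,\lambda)=\frac{\widehat{F^N}(\lambda)}{N\,\widehat{F^{N-1}}(\lambda)-(N-1)\,\widehat{F^{N}}(\lambda)},\qquad \lambda>0.
\]
Dividing numerator and denominator by $\widehat{F^{N-1}}(\lambda)$ and writing $H:=H_{N,N-1}(F,\lambda)$, this becomes $K=H/\bigl(N-(N-1)H\bigr)$, which I would solve for $H$:
\[
H_{N,N-1}(F,\lambda)=\frac{N\,K(F,\lambda)}{1+(N-1)\,K(F,\lambda)},\qquad \lambda>0,
\]
the denominator being strictly positive since $K(F,\lambda)>0$ for $\lambda>0$. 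In particular, $K(F,\cdot)$ determines $H_{N,N-1}(F,\cdot)$ on $(0,\infty)$.

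Finally I would verify that $F$ meets the hypotheses of Theorem \ref{Mainthm}: as the distribution function of a positive random variable it is nonnegative, nondecreasing, \cadlag, and bounded (hence of exponential order), and by assumption it is right analytic at every point of $[0,\infty)$ with $F(x)>0$ for all $x>0$. So if $\widetilde F$ is another distribution function satisfying these same hypotheses with $K(\widetilde F,\cdot)=K(F,\cdot)$, then the displayed identity forces $H_{N,N-1}(F,\cdot)=H_{N,N-1}(\widetilde F,\cdot)$, and Theorem \ref{Mainthm}, applied with $n=N$ and $m=N-1$, yields $F=\widetilde F$. I expect the only delicate points to be the two reductions in the middle step—justifying $\E e^{-\lambda\epsilon_{(N)}}=\lambda\,\widehat{F^N}(\lambda)$ together with the order-statistics formula for the law of $\epsilon_{(N-1)}$, and checking that the algebraic inversion of $K$ never divides by zero—after which the conclusion is immediate from Theorem \ref{Mainthm}.
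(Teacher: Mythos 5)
Your proof is correct and follows essentially the same route as the paper's: cancel the common factor $\E e^{-\lambda X^*}$, express the Laplace transforms of the order statistics via $\widehat{F^N}$ and $\widehat{F^{N-1}}$, invert the resulting M\"obius relation to recover $H_{N,N-1}(F,\cdot)$ from $K(F,\cdot)$, and apply Theorem \ref{Mainthm}. (Your explicit relation $K=H/\bigl(N-(N-1)H\bigr)$ with $H=H_{N,N-1}$ is the correct one; the paper's final displayed formula contains a small sign/indexing slip, which affects neither argument.)
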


\begin{proof}
Ordering the $X_i$ is equivalent to ordering the $\epsilon_i$:
\[
X_{(i)} = X^* + \epsilon_{(i)}.
\]
From this we obtain
\begin{equation}
\label{Xe}
K(F,\lambda) = \frac{\E e^{-\lambda X_{(N)}}}{\E e^{-\lambda X_{(N-1)}}}
= \frac{\E e^{-\lambda \epsilon_{(N)}}}{\E e^{-\epsilon_{(N-1)}}}.
\end{equation}
Integrating by parts in a Lebesgue-Stieltjes integral we obtain
\begin{align}
\label{enum}
\E e^{-\lambda \epsilon_{(N)}}
&= \int_{[0,\infty)} e^{-\lambda x} \P(\epsilon_{(N)} \in dx)
= \int_0^\infty \lambda e^{-\lambda x} \P(\epsilon_{(N)} \le x) dx\nonumber
\\&= \int_0^\infty \lambda e^{-\lambda x} F(x)^N dx
= \lambda \widehat{F^N}(\lambda),&
\end{align}
where $\widehat{F^N}$ is the Laplace transform 
of the function $x \mapsto F(x)^N$.
Since
\begin{align*}
\P(\epsilon_{(N-1)} \le x) &= \P(\epsilon_{(N)} \le x) 
- \P(\epsilon_{(N-1)} < x < \epsilon_{(N)})\\
&= F(x)^N - N F(x)^{N-1}(1-F(x))\\
&=  N F(x)^{N-1}- (N-1)F(x)^N,
\end{align*}
we similarly have
\begin{align}
\label{eden}
\E e^{-\lambda \epsilon_{(N-1)}}
&= 
\int_0^\infty \lambda e^{-\lambda x} (N F(x)^{N-1}- (N-1)F(x)^N) dx&\nonumber
\\&= \lambda N \widehat{F^{N-1}}(\lambda) - 
\lambda (N-1) \widehat{F^N}(\lambda).&
\end{align}
Combining \eqref{Xe}, \eqref{enum} and \eqref{eden}, 
we obtain
\[
K(F, \lambda) = \frac{H_{N-1,N}(F,\lambda)}{N+(N-1) H_{N-1,N}(F,\lambda)}.
\]
By Theorem \ref{Mainthm}, $H_{N-1,N}(F,\cdot)$ uniquely determines $F$
and so the same is true for $K(F, \cdot)$.
\end{proof}

\paragraph{Example 2}
Suppose $\epsilon$ has a lognormal distribution with parameters the 
real numbers $\mu$ and $\sigma$, that is, 
$\epsilon = \exp(\sigma \xi - \mu)$, where $\xi$ is 
a standard normal random variable.
Let $K(\mu, \sigma, \lambda) =  
\frac{\E e^{-\lambda \epsilon_{(N)}}}{\E e^{-\epsilon_{(N-1)}}}$.
By Theorem \ref{aucthm}, the mapping $(\mu, \sigma) \mapsto 
K(\mu, \sigma, \cdot)$ is injective. Unlike Example 1, we have no explicit
formula  for the inverse of this mapping.
The reason is the difficulty in obtaining the Laplace transform
of the lognormal distribution (and hence of its powers); see \cite{AJR}.

\section{Open problems}
The question considered in this paper was the injectivity of the 
map $f \mapsto H_{n,m}(f, \cdot)$, where $H_{n,m}(f, \cdot)$
is the ratio of the Laplace transforms of $f^n$ and $f^m$.
Although we answered the question affirmatively when $f$ is a polynomial
(Theorem \ref{poly})
or, more generally, an entire function (see Remark \ref{entire}),
and when $f$ is a nonnegative nondecreasing right analytic function (Theorem \ref{Mainthm}),
we conjecture that the result is much more general.
We can then pose the problem as: show that 
$f \mapsto H_{n,m}(f, \cdot)$ is injective when $f$ ranges over a significantly larger class,
e.g.\ the class of \cadlag functions.

Regarding the application of Section \ref{auctions}, we conjecture that Theorem
\ref{aucthm}, namely, the injectivity of the function $F \mapsto K(F, \cdot)$ remains
true when $F$ ranges over the class of all cumulative distribution functions.

Finally, we ignored completely the inversion problem. An open question then is:
is there an analog of the Laplace inversion formula that determines the function
$f$ from $H_{n,m}(f,\cdot)$ when $n-m$ is odd?

%\section{Discussions and Plan}

%\noindent 1. Theorem \ref{main} requires the condition (\ref{finiteorder}). Can we replace the condition (\ref{finiteorder}) by that $f,g$ are infinitely differentiable on $[0,\infty)$ with $f^{(k)}(0)=g^{(k)}(0)=0$ for any $k\geq 0$? An exercise we can do is the following:

%Let $p(x)=e^{-\frac{1}{x^2}}, \forall x\geq 0.$ Let $q$ be infinitely differentiable on $[0,\infty)$ with $g^{(k)}(0)=0$ for any $k\geq 0$. Show that  
%$$\frac{\LL_\lambda(f^n)}{\LL_\lambda(f^{n-1})}=\frac{\LL_\lambda(g^n)}{\LL_\lambda(g^{n-1})},\quad \text{for some }n\geq 1$$
%is equivalent to 
%$$f=g.$$

%\noindent 3. Our proof requires too much on the smoothness of the distribution function $F_\epsilon$. I wish to prove the following: for cadlag functions $f,g$ satisfying Condition ($*$), 
%$$\frac{\LL_\lambda(f^n)}{\LL_\lambda(f^{n-1})}=\frac{\LL_\lambda(g^n)}{\LL_\lambda(g^{n-1})},\quad \text{for some }n\geq 1$$
%is equivalent to 
%$$f=g.$$
%This general case covers the auction scenario (assuming $\epsilon>0$). But it is more important than that. It says that not only Laplace transforms determine a function,  the ratio of Laplace transforms can also do it. 

\subsection*{Acknowledgments}
We would like to thank Yao Luo for pointing out this problem. 
We also thank Konstantinos Dareiotis, 
Yao Luo and Yanqi Qiu for reading 
the draft version and providing useful comments.

\vspace*{1cm}

\noindent
\begin{minipage}{\textwidth}
\begin{minipage}{7cm}
Takis Konstantopoulos\\
Department of Mathematical Sciences\\
The University of Liverpool\\ 
Liverpool  L69 7ZL, UK\\
\href{mailto:takiskonst@gmail.com}{takiskonst@gmail.com}
\end{minipage}
\begin{minipage}{7cm}
Linglong Yuan\\
Department of Mathematical Sciences\\
The University of Liverpool\\ 
Liverpool  L69 7ZL, UK\\
\href{mailto:yuanlinglongcn@gmail.com}{yuanlinglongcn@gmail.com}
\end{minipage}
\end{minipage}

\end{document}